\newcommand{\NN}{\mathbb N}
\newcommand{\ZZ}{\mathbb Z}
\newcommand{\QQ}{\mathbb Q}
\newcommand{\tnorm}[1]{{\left\vert\kern-0.25ex\left\vert\kern-0.25ex\left\vert #1 
   \right\vert\kern-0.25ex\right\vert\kern-0.25ex\right\vert}}
\newtheorem{stn}{Sætning}[section]
\newtheorem{theorem}[stn]{Theorem}
\newtheorem{lemma}[stn]{Lemma}
\theoremstyle{definition}\newtheorem{defi}[stn]{Definition}
\theoremstyle{remark}\newtheorem{rem}[stn]{Remark}
\title{On the homotopy type of choice spaces}
\author{Shreyas Samaga}
\address{Shreyas Samaga }
\email{snshreyas@iiserb.ac.in}
\begin{document}
\begin{abstract}
We study continuous, symmetric and unanimous aggregation functions and continuous majority functions and prove that such functions exist on a choice space if and only if the choice space is contractible.
\end{abstract}

\maketitle

\begin{section}
{Introduction}
In this article, we will look at a few possible social choice functions like the symmetric and unanimous aggregation function and the majority function. Our aim is to find the required structure on the choice space for it to admit such functions. Such functions are used in Social Choice Theory. Social Choice Theory is a framework to aggregate or combine individuals' preferences to reach a collective decision. A symmetric aggregation function does not distinguish between individuals and gives equal weight to each person's preference as it is symmetric, i.e. independent of the ordering of the arguments of the function. A majority function is a type of aggregation function with the property that if more than half the arguments of the function are equal then the output of the function also equals that value. This is also an interesting function to study as it captures the preference of a majority among the individuals. Chichilnisky-Heal in \cite{Chichilnisky-Heal} observed that if the choice space is a finite CW-complex, a symmetric and unanimous aggregation function exists for any number of agents if and only if the choice space is contractible. Weinberger in \cite{Weinberger} strengthened the result and proved that if the choice space is a finite CW-complex and if a symmetric and unanimous aggregation function exists for some fixed number of individuals, then the choice space is contractible. In this article, we give a detailed proof of Weinberger's result in \cite{Weinberger} and also provide a simplified proof of the result proven in \cite{Chichilnisky-Heal}. Furthermore, we provide a proof for the existence of a majority function on a contractible choice space and prove that if a majority function exists for some $n$ on a locally finite CW-complex $X$, then $X$ is contractible which is a generalisation of this result which was known only for finite CW-complexes, as can be seen in section 3.9 of \cite{Taylor}.
\end{section}

\begin{section}{Preliminaries}

Let us recall some basic results from Algebraic Topology. These results can be found in \cite{Hatcher}.

\begin{theorem}[Whitehead]\label{Whitehead Theorem}
Let $X$ and $Y$ be CW-complexes. Let $f:X \to Y$ be a map such that it induces isomorphisms on all the homotopy groups, then $f$ is a homotopy equivalence.
\end{theorem}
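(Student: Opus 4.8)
The plan is to follow the classical route via the mapping cylinder together with the compression lemma, as in \cite{Hatcher}. First I would reduce to the case of an inclusion of a subcomplex: by cellular approximation we may assume $f$ is cellular, so the mapping cylinder $M_f$ carries a CW structure in which $X$ sits as a subcomplex. Since $f$ factors as the composite $X \hookrightarrow M_f \xrightarrow{\,r\,} Y$ with $r$ the canonical deformation retraction (hence a homotopy equivalence), it suffices to show that the inclusion $i : X \hookrightarrow Z := M_f$ is a homotopy equivalence. Feeding the hypothesis — that $i$ induces a bijection on path components and isomorphisms on $\pi_n$ for all $n \ge 1$ and all basepoints — into the long exact sequence of the pair $(Z,X)$ then gives $\pi_n(Z,X) = 0$ for every $n \ge 1$ and every basepoint.

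The core step is the \emph{compression lemma}: for a CW pair $(Z,X)$ with $\pi_n(Z,X) = 0$ for all $n$, every map of pairs $(D^n,\partial D^n) \to (Z,X)$ is homotopic rel $\partial D^n$ to a map whose image lies in $X$. Granting this, I would construct a deformation retraction of $Z$ onto $X$ by induction on skeleta. Suppose a deformation retraction of $X \cup Z^k$ onto $X$ has been built. For each $(k{+}1)$-cell $e$ of $Z$ not already in $X$, precomposing its characteristic map with the retraction just constructed (on the boundary sphere) yields a map $(D^{k+1},\partial D^{k+1}) \to (Z,X)$, which the lemma compresses into $X$ rel boundary; combining these compressions over all such cells with the homotopy extension property for the CW pair $(Z,\, X \cup Z^k)$ extends the retraction over $X \cup Z^{k+1}$. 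Carrying out the $k$-th stage during the time interval $[1-2^{-k},\,1-2^{-k-1}]$, the concatenated homotopy is eventually constant on each cell and therefore continuous on $Z$ in the weak topology, so it defines a deformation retraction of $Z$ onto $X$; hence $i$, and thus $f$, is a homotopy equivalence.

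The main obstacle is the compression lemma itself, proved by an inductive homotopy over the skeleta of $D^n$ that pushes the image off the cells of $Z \smallsetminus X$ one dimension at a time, and the bookkeeping needed to assemble the per-cell homotopies into a single globally defined, continuous deformation retraction: each cell's homotopy must be performed relative to the part of the retraction already built, homotopy extension must be invoked at each stage, and continuity of the infinite concatenation must be checked against the CW topology. The mapping-cylinder reduction (cellular approximation and the explicit CW structure on $M_f$ making $X$ a genuine subcomplex) is routine but should be set up with care.
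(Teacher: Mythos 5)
Your proposal is correct and is exactly the standard argument the paper is implicitly invoking: the paper states Whitehead's theorem without proof, citing \cite{Hatcher}, and your route — cellular approximation, replacing $f$ by the inclusion of $X$ into the mapping cylinder, deducing $\pi_n(M_f,X)=0$ from the long exact sequence, and then deformation-retracting skeleton by skeleton via the compression lemma — is precisely Hatcher's proof. No gaps; the points you flag as needing care (homotopy extension at each stage and continuity of the infinite concatenation in the weak topology) are indeed the only delicate spots.
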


\begin{rem}\label{Remark 1}
Let $X$ be a CW-complex. If $\pi_i(X)\cong 0$  $\forall$ $0\leqslant i\leqslant \infty$ then $X$ is contractible.\\
Indeed, consider a map $f:X \to \{p\}$ where $\{p\}$ is a point. It is clear that $\pi_i(\{p\}) = 0$ $\forall i$. We also have induced isomorphisms between the homotopy groups of $X$ and $\{p\}$. From Whitehead's theorem, we get that $f$ is a homotopy equivalence. Thus, $X$ is contractible.
\end{rem}

\begin{defi}[Cellular Map]
Let $X$ and $Y$ be CW-complexes. Any map $f:X \to Y$ such that $f(X^{(n)}) \subset Y^{(n)}$ $\forall n$ is a cellular map.
\end{defi}

\begin{theorem} \label{Cellular Approximation}
Let $X$ and $Y$ be CW-complexes. Then every map $f:X \to Y$ is homotopic to a cellular map.
\end{theorem}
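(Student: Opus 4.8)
The plan is to homotope $f$ into cellular position one skeleton at a time. Concretely, I would construct inductively a sequence of maps $f_n\colon X\to Y$ with $f_0=f$ such that $f_n$ is homotopic to $f_{n-1}$ \emph{rel $X^{(n-1)}$} and $f_n(X^{(n)})\del Y^{(n)}$. Because the $n$th homotopy is stationary on $X^{(n-1)}$ — and hence on every lower skeleton — each fixed cell of $X$ is moved during only finitely many stages; performing the $n$th homotopy during the time interval $[1-2^{-n},\,1-2^{-n-1}]$ therefore glues the stages into one well-defined continuous homotopy from $f$ to a cellular map, with no difficulty even when $X$ is infinite-dimensional.

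For the inductive step I would assume $f:=f_{n-1}$ satisfies $f(X^{(n-1)})\del Y^{(n-1)}$ and work cell by cell. Fix an $n$-cell $e^n$ of $X$ with characteristic map $\Phi\colon D^n\to X$; then $f\Phi(D^n)$ is compact, so it meets only finitely many cells of $Y$. Let $k$ be the maximal dimension of a cell met; if $k\leqslant n$ nothing is needed, so suppose $k>n$ and let $e^k$ be such a cell of dimension $k$. Let $Z$ be the smallest subcomplex of $Y$ containing all cells met, a finite subcomplex; since $k$ is maximal in $Z$, no closed cell other than $\overline{e^k}$ contains $e^k$, so $Z'=Z\smallsetminus e^k$ is again a subcomplex, and $f\Phi(\partial D^n)\del Y^{(n-1)}$ avoids $e^k$ (as $k\geqslant n$) and hence lands in $Z'$. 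The key lemma I would isolate is: if $g\colon D^n\to Z'\cup e^k$ with $g(\partial D^n)\del Z'$ and $k>n$, then $g$ is homotopic rel $\partial D^n$ to a map into $Z'$. Granting it, I apply it to $f\Phi$, transport the resulting homotopy back across $\Phi$ and glue it with the constant homotopy on $X^{(n-1)}$ (the pieces agree on $X^{(n-1)}$, where all are stationary), doing this simultaneously over all $n$-cells of $X$; this decreases the number of cells of dimension $>n$ that the image of $X^{(n)}$ meets, so finitely many iterations push $f(X^{(n)})$ into $Y^{(n)}$. Finally, the homotopy extension property for the CW pair $(X,X^{(n)})$ upgrades this homotopy of $f|_{X^{(n)}}$ to a homotopy of $f$ on all of $X$, still rel $X^{(n-1)}$, which defines $f_n$.

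The one genuinely geometric ingredient, and the step I expect to be the main obstacle, is the key lemma: a continuous map $D^n\to D^k$ with $n<k$ can be surjective (space-filling curves), so non-surjectivity of $g$ onto the interior of $e^k$ is not automatic. The standard remedy is a smoothing argument — identifying $e^k$ with an open disk and choosing a slightly smaller closed ball $B\del e^k$, one first homotopes $g$ rel $\partial D^n$ to a map that is smooth on the compact set $g^{-1}(B)$; by Sard's theorem the image of a smooth map from an $n$-manifold has measure zero in $\RR^k$, so it misses some point $p\in\mathrm{int}\,B$; and then one completes the homotopy by the standard deformation retraction of $(Z'\cup e^k)\smallsetminus\{p\}$ onto $Z'$ (radial in $e^k$ toward $\partial e^k\del Y^{(k-1)}\del Z'$). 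An alternative is a simplicial-approximation argument with a dimension count in place of Sard. I would state and prove this smoothing-and-retraction lemma carefully; once it is available, the remainder is routine bookkeeping with induction on skeleta and repeated use of the homotopy extension property.
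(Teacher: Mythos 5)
The paper does not prove this statement; it is quoted as a standard background result with a citation to \cite{Hatcher}, so there is no in-paper argument to compare against. Your proposal is correct and is essentially the standard proof of cellular approximation as given in \cite{Hatcher}: induction over skeleta rel the previous skeleton, reduction to a single cell via compactness of $f\Phi(D^n)$, a compression lemma pushing a map $D^n\to Z'\cup e^k$ ($k>n$) off a point of $e^k$ and then radially into $Z'$, assembly via the homotopy extension property, and the $[1-2^{-n},1-2^{-n-1}]$ reparametrisation to handle infinitely many skeleta. The only substantive deviation is in the key lemma, where you use a smoothing-plus-Sard argument to find a missed point $p\in e^k$ while Hatcher instead proves a piecewise-linear approximation lemma and counts dimensions; both are standard and correct, with Sard being quicker to invoke and the PL route avoiding any appeal to differential topology. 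Your own flagging of the space-filling-curve issue is exactly the right point to worry about, and your treatment of it is adequate.
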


\begin{rem}\label{Remark 2}
An immediate consequence of the above theorem is that the fundamental group of a finite CW-complex $X$ is finitely generated because the map $\gamma \colon S^1 \to X$ is a composition of the maps $\alpha: S^1 \to X^{(1)}$ and the inclusion map $i:X^{(1)} \to X$. $S^1$ here is given the CW structure with one $0$-cell and one $1$-cell. As we know that the elements of the fundamental group are the homotopy classes of loops, it is clear from the above argument that if there are finitely many $1$-cells in $X$ then the fundamental group also has to be finitely generated.
\end{rem}

\begin{theorem}[Hurewicz]\label{Hurewicz Theorem}
Let $X$ be an ($n-1$) connected space, $n>1$. Then $\tilde{H_i}=0$ $\forall i<n$ and $\pi_n(X) \cong H_n(X)$.
\end{theorem}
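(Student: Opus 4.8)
The plan is to deduce both assertions from the cellular chain complex after passing to a convenient model. First I would replace $X$ by a homotopy-equivalent CW complex using CW approximation (a standard consequence of Theorem~\ref{Cellular Approximation}); since a weak homotopy equivalence induces isomorphisms on all homotopy and on all homology groups, this changes neither side of the statement. Then, using that $X$ is $(n-1)$-connected and $n>1$ (hence simply connected), I would trade cells to arrange that the $(n-1)$-skeleton is a single point, i.e.\ that $X$ has exactly one $0$-cell and no cells in dimensions $1,\dots,n-1$. This cell-trading step is the one genuinely technical point: one pushes the attaching maps of low cells off the lower skeleta by cellular approximation and cancels the resulting pairs of cells, and simple connectivity is what prevents $\pi_1$ from obstructing the cancellations.

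Granting such a model, the cellular chain groups satisfy $C_i(X)=0$ for $0<i<n$. Consequently $\tilde H_i(X)=0$ for every $i<n$ (this is the first assertion), the boundary $\partial_n\colon C_n\to C_{n-1}$ vanishes, and therefore $H_n(X)=C_n/\operatorname{im}\partial_{n+1}$.

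For the second assertion, note that $X^{(n)}=\bigvee_\alpha S^n_\alpha$ is a wedge of $n$-spheres indexed by the $n$-cells. Two ingredients are needed. First, for $n\ge 2$ the group $\pi_n\big(\bigvee_\alpha S^n_\alpha\big)$ is free abelian on the inclusions $S^n_\alpha\hookrightarrow X^{(n)}$, and the Hurewicz map $h\colon\pi_n(X^{(n)})\to H_n(X^{(n)})=C_n$ is an isomorphism; for a finite index set this follows by comparing the wedge with the product $\prod_\alpha S^n_\alpha$, of which the wedge is the subcomplex consisting of all cells of dimension below $2n$, so that the inclusion induces an isomorphism on $\pi_n$ once $n\ge 2$, after which one invokes $\pi_n(S^n)\cong\ZZ$; the general case follows by taking the colimit over finite subwedges. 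Second, it is standard (the analysis of attaching cells via the long exact sequence of the $n$-connected pair $(X^{(n+1)},X^{(n)})$) that $\pi_n(X^{(n)})\to\pi_n(X^{(n+1)})$ is onto with kernel the subgroup generated by the homotopy classes $[\varphi_\alpha]$ of the attaching maps of the $(n+1)$-cells, while $\pi_n(X^{(n+1)})=\pi_n(X)$ because attaching cells of dimension $\ge n+2$ does not alter $\pi_n$.

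To conclude, a direct degree computation identifies $h([\varphi_\alpha])\in C_n$ with the cellular boundary $\partial_{n+1}(e_\alpha)$ of the corresponding $(n+1)$-cell. Hence the isomorphism $h$ carries the kernel subgroup onto $\operatorname{im}\partial_{n+1}$ and descends to an isomorphism $\pi_n(X)=\pi_n(X^{(n)})/\langle[\varphi_\alpha]\rangle_\alpha \xrightarrow{\;\cong\;} C_n/\operatorname{im}\partial_{n+1}=H_n(X)$, which is precisely the Hurewicz homomorphism of $X$. I expect the principal obstacles to be the cell-trading construction of the reduced CW model and a careful proof of the first ingredient above, whose base case $\pi_n(S^n)\cong\ZZ$ is essentially this very theorem for $X=S^n$ and so must be established independently of Hurewicz — for instance via the Freudenthal suspension theorem together with $\pi_1(S^1)\cong\ZZ$ — alongside the bookkeeping needed to keep the kernels and images matched up through the exact sequences.
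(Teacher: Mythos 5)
The paper does not prove this statement; it appears as a quoted preliminary with a pointer to Hatcher, so there is no in-paper argument to compare against. Your sketch is essentially the standard proof found in that source: CW approximation, cell trading to collapse the $(n-1)$-skeleton to a point, the computation of $\pi_n$ of a wedge of $n$-spheres via comparison with the product, and the identification of the kernel of $\pi_n(X^{(n)})\to\pi_n(X)$ with the image of the cellular boundary $\partial_{n+1}$. The outline is correct; the one substantive ingredient you leave implicit is that identifying the kernel with the subgroup generated by the attaching maps requires knowing that $\pi_{n+1}(X^{(n+1)},X^{(n)})$ is generated by the characteristic maps of the $(n+1)$-cells, which rests on homotopy excision (Blakers--Massey) and is the real technical heart of the argument, and you rightly flag that $\pi_n(S^n)\cong\ZZ$ must be established independently to avoid circularity.
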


\begin{lemma}\label{Cellular Homology}
If $X$ is a CW-complex, then:
\begin{enumerate}
\item$H_k(X^{(n)}, X^{(n-1)}) = 0$ $\forall k \neq n$ and is free abelian for $k=n$, with a basis in correspondence with the $n$-cells of $X$.
\item$H_k(X^{(n)})=0$ $\forall k>n$.
\item The map from $H_k(X^{(n)})$ to $H_k(X)$ induced by the inclusion of $X^{(n)}$ in $X$ is an isomorphism for $k<n$.
\end{enumerate}
\end{lemma}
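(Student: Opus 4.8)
The plan is to reduce all three statements to a single computation of the relative groups $H_k\big(X^{(n)},X^{(n-1)}\big)$ together with two applications of the long exact sequence of a pair.

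For part (1), I would first use that $(X^{(n)},X^{(n-1)})$ is a CW pair, hence a good pair ($X^{(n-1)}$ is a neighbourhood deformation retract in $X^{(n)}$), so that $H_k\big(X^{(n)},X^{(n-1)}\big)\cong\tilde H_k\big(X^{(n)}/X^{(n-1)}\big)$. Collapsing $X^{(n-1)}$ in the characteristic map $\Phi_\alpha\colon D^n\to X^{(n)}$ of each $n$-cell identifies $X^{(n)}/X^{(n-1)}$ with the wedge $\bigvee_\alpha S^n_\alpha$ having one sphere per $n$-cell of $X$. Since CW complexes are well pointed, the reduced homology of a wedge splits as a direct sum even over an infinite index set, so $\tilde H_k\big(\bigvee_\alpha S^n_\alpha\big)\cong\bigoplus_\alpha\tilde H_k(S^n_\alpha)$, which is $0$ for $k\neq n$ and free abelian with basis the set of $n$-cells for $k=n$.

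For part (2), consider the long exact sequence of the pair $(X^{(n)},X^{(n-1)})$,
$$H_{k+1}\big(X^{(n)},X^{(n-1)}\big)\to H_k\big(X^{(n-1)}\big)\to H_k\big(X^{(n)}\big)\to H_k\big(X^{(n)},X^{(n-1)}\big).$$
For $k>n$ both outer relative groups vanish by (1), so the inclusion induces an isomorphism $H_k\big(X^{(n-1)}\big)\cong H_k\big(X^{(n)}\big)$. Iterating down the finitely many inclusions $X^{(0)}\subset X^{(1)}\subset\cdots\subset X^{(n)}$ gives $H_k\big(X^{(n)}\big)\cong H_k\big(X^{(0)}\big)$, and the latter is $0$ for $k>n\geqslant 0$ because $X^{(0)}$ is discrete. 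For part (3), the same exact sequence applied to the pairs $(X^{(m+1)},X^{(m)})$ with $m\geqslant n$ shows, for $k<n$, that $H_k\big(X^{(m)}\big)\to H_k\big(X^{(m+1)}\big)$ is an isomorphism, since in this range $k\neq m$ and $k\neq m+1$ force the neighbouring relative groups to vanish by (1). Hence the inclusion induces an isomorphism $H_k\big(X^{(n)}\big)\cong H_k\big(X^{(m)}\big)$ for every $m\geqslant n$, which already proves the claim when $X$ is finite-dimensional. For general $X$ I would invoke the compact support of singular homology together with the fact that a compact subset of a CW complex meets only finitely many cells, hence lies in some skeleton: any singular cycle of $X$ and any chain bounding it are supported in some $X^{(m)}$, so $H_k(X)=\varinjlim_m H_k\big(X^{(m)}\big)$; since this directed system is constant for $m\geqslant n$, the inclusion $X^{(n)}\hookrightarrow X$ induces an isomorphism on $H_k$ for $k<n$.

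The only non-formal points are in the infinite case: one needs that the wedge-sum splitting of reduced homology holds for arbitrary wedges — which follows from the nondegeneracy of basepoints in CW complexes — and the compact-support argument that every singular chain in a CW complex lies in a finite subcomplex, hence in a skeleton. In the finite-dimensional (in particular finite) case neither is needed, and the proof is purely the computation (1) plus the two exact-sequence inductions.
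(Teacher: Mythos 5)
Your proof is correct and is the standard argument: identify $H_k\big(X^{(n)},X^{(n-1)}\big)$ with the reduced homology of a wedge of $n$-spheres via the good-pair property, then run the long exact sequences of the skeletal pairs in the two directions, with the compact-support/direct-limit observation handling infinite-dimensional complexes. The paper offers no proof of this lemma at all — it is recalled as a preliminary and attributed to \cite{Hatcher}, where precisely this argument appears — so your write-up simply supplies, correctly, the proof the paper omits.
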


Now, we recall that for a CW-complex, the cellular homology groups are isomorphic to the ordinary singular homology groups. The reader can refer to \cite{Hatcher} for definitions of singular homology groups and cellular homology groups.

\begin{theorem}
Let $X$ be a CW-complex. $H_n^{CW}(X) \cong H_n(X)$.
\end{theorem}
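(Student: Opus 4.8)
The plan is to construct the cellular chain complex explicitly out of the relative homology groups that already appear in Lemma~\ref{Cellular Homology}, and then to identify the homology of that complex with singular homology by a diagram chase through the long exact sequences of the skeletal pairs $(X^{(n)},X^{(n-1)})$. Concretely, one sets $C_n^{CW}(X):=H_n(X^{(n)},X^{(n-1)})$ and defines the differential $d_n\colon C_n^{CW}(X)\to C_{n-1}^{CW}(X)$ as the composite $j_{n-1}\circ\partial_n$, where $\partial_n\colon H_n(X^{(n)},X^{(n-1)})\to H_{n-1}(X^{(n-1)})$ is the connecting homomorphism of the pair and $j_{n-1}\colon H_{n-1}(X^{(n-1)})\to H_{n-1}(X^{(n-1)},X^{(n-2)})$ is induced by inclusion. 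Since $\partial_n\circ j_n=0$ (consecutive maps in the long exact sequence of $(X^{(n)},X^{(n-1)})$), one gets $d_n\circ d_{n+1}=j_{n-1}\circ\partial_n\circ j_n\circ\partial_{n+1}=0$, so this is genuinely a chain complex; by part~(1) of Lemma~\ref{Cellular Homology} it is the usual cellular chain complex, and its $n$-th homology is $H_n^{CW}(X)$ by definition.

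First I would identify $\ker d_n$. Part~(2) of Lemma~\ref{Cellular Homology} gives $H_n(X^{(n-1)})=0$ and $H_{n-1}(X^{(n-2)})=0$, so from the long exact sequences of $(X^{(n)},X^{(n-1)})$ and of $(X^{(n-1)},X^{(n-2)})$ one reads off that $j_n\colon H_n(X^{(n)})\hookrightarrow C_n^{CW}(X)$ and $j_{n-1}\colon H_{n-1}(X^{(n-1)})\hookrightarrow C_{n-1}^{CW}(X)$ are both injective. Injectivity of $j_{n-1}$ then forces $\ker d_n=\ker\partial_n$, which by exactness of the sequence of $(X^{(n)},X^{(n-1)})$ equals $\operatorname{im} j_n\cong H_n(X^{(n)})$.

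Next I would compute $\operatorname{im} d_{n+1}$ and assemble the quotient. The long exact sequence of $(X^{(n+1)},X^{(n)})$, together with $H_n(X^{(n+1)},X^{(n)})=0$ from part~(1) of Lemma~\ref{Cellular Homology}, shows that the inclusion-induced map $H_n(X^{(n)})\to H_n(X^{(n+1)})$ is surjective with kernel $\operatorname{im}\bigl(\partial_{n+1}\colon C_{n+1}^{CW}(X)\to H_n(X^{(n)})\bigr)$, hence $H_n(X^{(n+1)})\cong H_n(X^{(n)})/\operatorname{im}\partial_{n+1}$. Applying the injective map $j_n$ identifies $H_n(X^{(n)})$ with $\ker d_n$ and identifies $\operatorname{im}\partial_{n+1}$ with $j_n(\operatorname{im}\partial_{n+1})=\operatorname{im}(j_n\circ\partial_{n+1})=\operatorname{im} d_{n+1}$. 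Consequently $H_n^{CW}(X)=\ker d_n/\operatorname{im} d_{n+1}\cong H_n(X^{(n)})/\operatorname{im}\partial_{n+1}\cong H_n(X^{(n+1)})$, and part~(3) of Lemma~\ref{Cellular Homology} tells us that the inclusion $X^{(n+1)}\hookrightarrow X$ induces an isomorphism $H_n(X^{(n+1)})\cong H_n(X)$ (the case $k=n<n+1$ of that statement). Chaining these isomorphisms gives $H_n^{CW}(X)\cong H_n(X)$.

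I do not expect a genuinely hard step here: the whole argument is bookkeeping with long exact sequences, most cleanly organized as a commutative ladder assembled from the pairs $(X^{(k)},X^{(k-1)})$. The points that need a little care are verifying that the inclusion-induced maps $j_k$ are injective (this is precisely where part~(2) of Lemma~\ref{Cellular Homology} is used) and checking that the identification of $\operatorname{im} d_{n+1}$ as a subgroup of $\ker d_n$ is the correct one. For an infinite-dimensional $X$ the final isomorphism $H_n(X^{(n+1)})\cong H_n(X)$ is not automatic from finite-dimensionality, but it is already absorbed into part~(3) of Lemma~\ref{Cellular Homology}, so no additional work (such as a colimit argument over skeleta) is needed.
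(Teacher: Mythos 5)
Your argument is correct and complete: it is exactly the standard identification of cellular with singular homology (Hatcher, Theorem~2.35), and the paper itself offers no proof of this statement, simply recalling it as a known fact with a reference to \cite{Hatcher}. All the steps check out --- the injectivity of the maps $j_k$ from part~(2) of Lemma~\ref{Cellular Homology}, the computation of $\ker d_n$ and $\operatorname{im} d_{n+1}$ from the long exact sequences, and the final passage from $H_n(X^{(n+1)})$ to $H_n(X)$ via part~(3), which as you observe already covers the infinite-dimensional case.
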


\begin{rem}\label {Remark 3}
It is clear from the above theorem that 
\begin{enumerate}
\item$H_n(X) = 0$ if $X$ has no $n$-cells.
\item If $X$ is a CW-complex with $k$ $n$-cells, then $H_n(X)$ is generated by at most $k$ generators.
\end{enumerate}
\end{rem}

The following result can be found in \cite{Sibe}

\begin{theorem}\label{Metrizability}
Every locally finite CW-complex is metrizable.
\end{theorem}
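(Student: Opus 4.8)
The plan is to obtain metrizability from \emph{Smirnov's metrization theorem}: a Hausdorff space is metrizable if and only if it is paracompact and locally metrizable. Every CW-complex is Hausdorff, so for a locally finite CW-complex $X$ it remains to establish local metrizability and paracompactness.

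For local metrizability I would first use the two defining features of the complex --- local finiteness and closure-finiteness --- to show that every point of $X$ has an open neighbourhood $U$ contained in a finite subcomplex: the finitely many open cells that meet $U$ have closures whose union $K$ is a finite subcomplex with $U\subseteq K$, and $U$ is then open in $K$. Hence it suffices to prove that a finite CW-complex is metrizable. For that I would invoke the characteristic maps: if the cells of $X$ are $e_1,\dots,e_m$ with characteristic maps $\Phi_i\colon D^{n_i}\to X$, then $\coprod_i\Phi_i\colon\coprod_i D^{n_i}\to X$ is a continuous surjection of a compact metric space onto the Hausdorff space $X$. A Hausdorff continuous image of a compact metric space is itself compact metrizable --- it is compact Hausdorff, and it is second countable because the (automatically closed) quotient map transports a countable base downstairs --- so $X$ is metrizable, hence so is the open subspace $U$ of $K$.

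For paracompactness I would split $X$ into its connected components, which in a CW-complex are open and closed subcomplexes, so that $X$ is a topological disjoint union of them; since an arbitrary disjoint union of paracompact spaces is paracompact, it is enough to treat a connected locally finite CW-complex. Such a complex has only countably many cells, because the incidence graph on its set of cells is connected and, by local finiteness, locally finite; hence it is a countable increasing union of finite subcomplexes, that is, $\sigma$-compact, and a locally compact $\sigma$-compact Hausdorff space is paracompact. (Alternatively one can simply cite the theorem that \emph{every} CW-complex is paracompact.) With Hausdorffness, local metrizability, and paracompactness in hand, Smirnov's theorem yields that $X$ is metrizable.

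I do not expect a single deep obstacle; the real care is bookkeeping around the weak topology. One must make sure that ``only finitely many cells meet $U$'' genuinely confines $U$ to a finite subcomplex in the weak topology, that a finite subcomplex carries exactly the quotient topology coming from the disjoint union of its closed cells (so the argument above is legitimate), and that the components of $X$ really are open. Once these routine points are checked, all the heavy lifting is outsourced to Smirnov's metrization theorem and to the elementary fact that a Hausdorff continuous image of a compact metric space is metrizable.
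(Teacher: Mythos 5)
The paper does not actually prove this statement; it is quoted verbatim from Marde\v{s}i\'c's survey with no in-text argument, so there is nothing of the paper's own to compare yours against. Your proof is a legitimate self-contained one, and its architecture (Hausdorff plus paracompact plus locally metrizable, then Smirnov's metrization theorem) is the standard textbook route to this fact. Two small points deserve tightening. First, the union of the closures of the finitely many open cells meeting $U$ need not itself be a subcomplex; you must pass to the smallest subcomplex containing those cells, which is still finite by closure-finiteness (each closed cell lies in a finite subcomplex, and one iterates downward through the skeleta). Second, your justification that the image of $\coprod_i D^{n_i}$ is second countable --- that the ``automatically closed quotient map transports a countable base downstairs'' --- does not literally work, since a closed map need not carry open sets to open sets; the correct statement is that the image of a countable base is a countable \emph{network}, and a compact Hausdorff space with a countable network is second countable (or you may simply cite the classical theorem that a Hausdorff continuous image of a compact metric space is metrizable). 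With those repairs the local-metrizability half is complete, and the paracompactness half (components are clopen subcomplexes, each has countably many cells by connectedness of the locally finite incidence graph, hence is $\sigma$-compact and locally compact Hausdorff, hence paracompact) is correct as written.
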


The following results can be found in Chapter 6 of \cite{Sakai}.

\begin{defi}
Let $X$ be a topological space. A \textbf{neighbourhood retract} of $X$ is a closed set in $X$ which is a retract of some neighbourhood in $X$. A metrizable space $X$ is called an \textbf{absolute neighbourhood retract (ANR)} if $X$ is a neighbourhood retract of every arbitrary metrizable space that contains $X$ as a closed subspace. A metrizable space $X$ is called an \textbf{absolute retract (AR)} if $X$ is a retract of every arbitrary metrizable space that contains $X$ as a closed subspace.   
\end{defi}

\begin{defi}
A space $Y$ is called an \textbf{absolute neighbourhood extensor for metrizable spaces (ANE)} if each map $f:A\to Y$ from any closed set $A$ in an arbitrary metrizable space $X$ extends over some neighbourhood $U$ of $A$ in $X$. When $f$ can be extended over $X$, we call $Y$ an \textbf{absolute extensor for metrizable spaces (AE)}.
\end{defi}

\begin{theorem}\label{ANR}
Every locally finite CW-complex is an ANR.
\end{theorem}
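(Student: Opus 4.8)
The plan is to reduce the statement to two classical facts about ANRs, both available in Chapter~6 of \cite{Sakai}: that the ANR property is \emph{local} for metrizable spaces (a metrizable space possessing an open cover by ANR subspaces is itself an ANR; this is Hanner's theorem), and that an open subspace of an ANR is again an ANR. Since Theorem~\ref{Metrizability} already gives that a locally finite CW-complex $X$ is metrizable, it then suffices to cover $X$ by open sets each of which is an ANR, and for this it is enough to know that every \emph{finite} CW-complex is an ANR.

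To see that a finite CW-complex $L$ is an ANR, I would argue by induction on the number of cells, building $L$ up by attaching cells in order of non-decreasing dimension. The $0$-skeleton $L^{(0)}$ is a finite discrete space, hence an ANR. If $L_{k-1}$ is the subcomplex built so far and $e_k$ is the next cell, of dimension $n_k\geq 1$, then its attaching map gives $L_k = L_{k-1}\cup_{\ph_k} D^{n_k}$ with $\ph_k\colon S^{n_k-1}\to L_{k-1}$. The ball $D^{n_k}$ is an AR (being convex), $S^{n_k-1}$ is a compact ANR sitting as a closed subspace of $D^{n_k}$, and $L_{k-1}$ is an ANR by the inductive hypothesis; so the adjunction-space theorem for ANRs (again in \cite{Sakai}) shows $L_k$ is an ANR. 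After finitely many steps we reach $L$, which is therefore an ANR.

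Now, given $x\in X$, local finiteness provides an open neighbourhood $V$ of $x$ in $X$ meeting only finitely many cells $e_1,\dots,e_k$; let $L$ be the subcomplex generated by $\{e_1,\dots,e_k\}$, which is finite. Every point of $V$ lies in one of $e_1,\dots,e_k$, so $V\subseteq L$, and since $L$ carries the subspace topology from $X$, the set $V$ is open in $L$. By the previous step $L$ is an ANR, hence its open subset $V$ is an ANR. Letting $x$ range over $X$ we obtain an open cover of $X$ by ANR subspaces, and Hanner's local-to-global theorem then yields that the metrizable space $X$ is an ANR.

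I expect the delicate point to be the reduction itself rather than any single computation: a locally finite CW-complex can be infinite-dimensional, so one cannot simply present $X$ as a colimit $\bigcup_n X^{(n)}$ of ANR skeleta and pass to the colimit (which may even fail to be metrizable), so the local characterization of ANRs is genuinely indispensable. The other point requiring attention is the precise hypotheses of the adjunction-space theorem---that the glued-in subspace be closed and, in general, itself an ANR---which is exactly why it is comfortable to work with the pair $(D^n,S^{n-1})$.
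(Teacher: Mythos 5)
The paper offers no proof of this statement at all: it is quoted as a known theorem, with a pointer to Chapter~6 of \cite{Sakai}, so there is no argument of the paper's to compare yours against. Your proof is correct and is essentially the standard one found in that reference. The three external ingredients you invoke --- Hanner's local-to-global theorem for metrizable spaces, stability of the ANR property under passage to open subspaces, and the adjunction-space theorem --- are all available there, and your induction on cells (attached in order of non-decreasing dimension, so that each attaching map lands in the part already built) correctly reduces the finite case to the pair $(D^{n},S^{n-1})$, where $S^{n-1}$ is a closed ANR subspace of the AR $D^{n}$. Two small points deserve to be made explicit. First, the claim that the subcomplex $L$ generated by the finitely many cells meeting $V$ is finite is exactly where closure-finiteness of CW-complexes enters (the closure of each cell is contained in a finite subcomplex); it is true, but it is the one place where a property of CW-complexes beyond local finiteness is being used. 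Second, the adjunction-space theorem also delivers metrizability of each $L_k$, which is needed for the induction to stay inside the category where ``ANR'' is defined; for finite complexes this is automatic (they are compact metrizable), but it is worth a word. Your closing remark is also well taken: for an infinite-dimensional locally finite complex one cannot simply pass to the colimit of the skeleta, and the local characterization of ANRs is genuinely the right tool.
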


\begin{theorem}\label{ANE}
Let $X$ be a metrizable space. Then
\begin{enumerate}
\item$X$ is ANR if and only if $X$ is ANE.
\item$X$ is AR if and only if $X$ is AE.
\end{enumerate}
\end{theorem}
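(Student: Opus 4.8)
Since Theorem~\ref{ANE} is a classical fact, the plan is to deduce it from two standard inputs. The first is the Dugundji extension theorem, which asserts that every convex subset of a normed linear space is an AE. The second is a classical embedding theorem (the Kuratowski embedding, refined so as to land on a closed set): every metrizable space is homeomorphic to a closed subspace of a convex subset of a normed linear space. Both parts would be handled in parallel, since the arguments for (1) and (2) differ only in replacing ``some neighbourhood of $A$'' by ``all of $Y$''.

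First I would treat the directions saying that the extension property implies the retraction property. Suppose $X$ is an ANE and is embedded as a closed subspace of an arbitrary metrizable space $M$. Applying the ANE property to the identity map $\id_X\colon X\to X$ yields an extension $r\colon U\to X$ over some neighbourhood $U$ of $X$ in $M$; since $r$ restricts to the identity on $X$, it is a retraction, so $X$ is a neighbourhood retract of $M$, and as $M$ was arbitrary $X$ is an ANR. The same argument with $U=M$ (using the AE property) shows that an AE is an AR.

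For the converse directions, assume first that $X$ is an ANR, and use the embedding theorem to identify $X$ with a closed subspace of a convex set $C$ in a normed linear space $L$. Since $X$ is an ANR and is closed in the metrizable space $C$, there are an open neighbourhood $W$ of $X$ in $C$ and a retraction $\rho\colon W\to X$. Now let $A$ be a closed subset of a metrizable space $Y$ and $f\colon A\to X$ a map. Regarding $f$ as a map into the convex set $C$ and applying the Dugundji extension theorem gives an extension $F\colon Y\to C$ of $f$; then $U:=F^{-1}(W)$ is open in $Y$ and contains $A$ (because $F(A)=f(A)\subseteq X\subseteq W$), and $\rho\circ F|_U\colon U\to X$ agrees with $f$ on $A$ since $\rho$ fixes $X$ pointwise. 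Hence $X$ is an ANE. If instead $X$ is an AR, then $X$ is a retract of all of $C$, so one may take $W=C$, hence $U=Y$, and $\rho\circ F$ is a global extension of $f$; thus $X$ is an AE.

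The only genuine obstacle is supplying the two classical inputs. The Dugundji extension theorem is itself a substantial result --- its proof builds the extension from a locally finite open cover of $Y\setminus A$ by balls shrinking towards $A$ together with a subordinate partition of unity --- and I would simply cite it, as available in \cite{Sakai}. Likewise the closed embedding of an arbitrary metrizable space into a convex subset of a normed space (via the Kuratowski map $x\mapsto d(x,\cdot)-d(x_0,\cdot)$ into $C_b(X)$, followed by a check that the image is closed in its convex hull) is standard and can be quoted from \cite{Sakai}. Granting these, the remaining steps above --- chiefly the verification that $\rho\circ F$ restricts to $f$ on $A$ --- are routine.
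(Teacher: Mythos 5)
Theorem~\ref{ANE} is not proved in the paper at all: it is imported as a background fact from Chapter~6 of \cite{Sakai}, so there is no in-paper argument to compare yours against. Your proposal is correct and is essentially the standard proof found in the cited source --- the forward implications by applying the extension property to the identity map of $X$ inside an ambient metrizable space, and the converses via a closed embedding of $X$ into a convex subset of a normed linear space combined with the Dugundji extension theorem; the only point needing mild care (which you handle) is shrinking the neighbourhood on which the retraction is defined to an open one so that $F^{-1}(W)$ is open.
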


\begin{theorem}\label{AR}
A metrizable space is an AR if and only if it is a contractible ANR.
\end{theorem}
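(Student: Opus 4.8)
The plan is to derive Theorem~\ref{AR} from Theorem~\ref{ANE} together with elementary point‑set facts about metrizable spaces (normality, Urysohn's lemma, the shrinking lemma, the gluing lemma). I may assume $X\neq\emptyset$, since the empty space is neither an AR nor contractible.

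For the forward implication, suppose $X$ is an AR. It is then an ANR, essentially by definition: a retraction of an ambient metrizable space onto $X$ is in particular a retraction of a neighbourhood of $X$. To see that $X$ is contractible, I would use Theorem~\ref{ANE} to regard $X$ as an AE, fix $x_{0}\in X$, and consider the closed subspace $A=(X\times\{0\})\cup(X\times\{1\})$ of the metrizable space $M=X\times[0,1]$. The map $f\colon A\to X$ with $f(x,0)=x$ and $f(x,1)=x_{0}$ is continuous (it is continuous on each of the two clopen pieces of $A$), so being an AE it extends to $F\colon X\times[0,1]\to X$; this $F$ is a homotopy from $\id_{X}$ to the constant map $x_{0}$, so $X$ is contractible.

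For the converse, suppose $X$ is a contractible ANR. By Theorem~\ref{ANE} it is an ANE, and by the same theorem it is enough to show that $X$ is an AE. Fix a contraction $H\colon X\times[0,1]\to X$ with $H(\cdot,0)=\id_{X}$ and $H(\cdot,1)\equiv x_{0}$. Given a closed set $A$ in a metrizable space $M$ and a map $f\colon A\to X$, extend $f$ (using that $X$ is an ANE) to $g\colon U\to X$ on an open neighbourhood $U$ of $A$. Since $M$ is metrizable, hence normal, choose an open $V$ with $A\subseteq V\subseteq\overline{V}\subseteq U$ and, by Urysohn's lemma, a continuous $\lambda\colon M\to[0,1]$ with $\lambda\equiv0$ on $A$ and $\lambda\equiv1$ on $M\setminus V$. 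I would then define $F\colon M\to X$ by $F(m)=H(g(m),\lambda(m))$ for $m\in U$ and $F(m)=x_{0}$ for $m\in M\setminus\overline{V}$. The two sets $U$ and $M\setminus\overline{V}$ are open and cover $M$, and on the overlap $U\setminus\overline{V}\subseteq M\setminus V$ one has $\lambda\equiv1$, so both clauses give $x_{0}$; hence $F$ is well defined and, by the gluing lemma, continuous. Since $\lambda\equiv0$ on $A\subseteq U$, we get $F|_{A}=H(f(\cdot),0)=f$, so $F$ extends $f$ and $X$ is an AE, hence an AR.

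The routine ingredients are the shrinking lemma and Urysohn's lemma in the normal space $M$. The one \emph{genuine} subtlety, and the reason for inserting $V$ rather than working directly with $U$, is continuity of $F$ at points of $M\setminus U$: declaring $F$ to equal $x_{0}$ exactly off $U$ would force the contraction $H$ to be, in effect, uniformly constant near time $1$, which can fail when $X$ is non‑compact. Choosing $V$ with $\overline{V}\subseteq U$ makes $M\setminus\overline{V}$ an \emph{open} set on which $F$ is constant, so the patching is between two open sets and no such uniformity is needed.
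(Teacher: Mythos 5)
Your proof is correct. Note, however, that the paper does not prove Theorem~\ref{AR} at all: it is recorded as background material imported from Chapter~6 of Sakai's book, alongside Theorems~\ref{ANR} and~\ref{ANE}, so there is no in-paper argument to compare against. What you have written is the standard self-contained derivation from the AE/ANE characterization (Theorem~\ref{ANE}): the forward direction extends the map $(x,0)\mapsto x$, $(x,1)\mapsto x_0$ from the closed set $(X\times\{0\})\cup(X\times\{1\})$ over the metrizable space $X\times[0,1]$ to produce a contraction, and the converse composes a local extension $g$ on an open neighbourhood $U\supseteq A$ with a contraction $H$, damped by a Urysohn function that is $0$ on $A$ and $1$ off a shrunken neighbourhood $V$ with $\overline{V}\subseteq U$. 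All the details check out: $U$ and $M\setminus\overline{V}$ form an open cover on whose overlap both clauses give $x_0$, so the pasting lemma applies, and $F|_A=H(f(\cdot),0)=f$. Your closing remark about why one must interpolate $V$ rather than setting $F\equiv x_0$ exactly on the closed set $M\setminus U$ is exactly the right subtlety to flag. The only effect of your argument relative to the paper is to replace an external citation with a short proof that uses nothing beyond Theorem~\ref{ANE}, normality of metrizable spaces, and Urysohn's lemma, which arguably makes the preliminaries more self-contained.
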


\begin{rem}\label{Remark 4}
From the above theorems, it becomes clear that every locally finite CW-complex is an ANR and is metrizable and hence is an ANE and that a contractible locally finite CW-complex is an AR and hence an AE.
\end{rem}

\end{section}

\begin{section}{Main Results}

\begin{defi}
Let $X$ be a topological space which is considered as the choice space for each individual. Let there be $n$ individuals. Then, a \textbf{symmetric and unanimous aggregation function} \\
$A:X^n \to X$ is a continuous function such that
\begin{enumerate}
\item$A(x_1,x_2,...,x_n) = A(x_{\sigma(1)},x_{\sigma(2)},...,x_{\sigma(n)})$ $\forall \sigma \in S_n$, the permutation group. This property is called anonymity.
\item$A(x,x,...,x) = x$. This is called unanimity.
\end{enumerate}
\end{defi}

\begin{lemma}[Weinberger]\label{Weinberger Lemma}
If $X$ is a connected CW-complex such that for some $n > 1$ there is a map A as stated above, then the fundamental group is abelian and all the homotopy groups are uniquely $n$-divisible. 
\end{lemma}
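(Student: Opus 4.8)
The plan is to exploit the map $A\colon X^n\to X$ together with the unanimity and anonymity hypotheses to put constraints on $\pi_1(X)$ and on all $\pi_k(X)$. First I would observe that unanimity says exactly that $A$ restricted to the diagonal $\Delta\colon X\hookrightarrow X^n$ is the identity, so $\Delta$ is a section of $A$ up to nothing — it is an honest one-sided inverse. Consequently, for every $k$, the induced homomorphism $\Delta_*\colon\pi_k(X,x_0)\to\pi_k(X^n,(x_0,\dots,x_0))\cong\pi_k(X)^n$ is split injective, and $A_*\colon\pi_k(X)^n\to\pi_k(X)$ is a retraction with $A_*\circ\Delta_*=\mathrm{id}$. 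Under the identification $\pi_k(X^n)\cong\pi_k(X)^n$, the diagonal map sends $\alpha\mapsto(\alpha,\dots,\alpha)$, and I would write $\iota_j\colon\pi_k(X)\to\pi_k(X)^n$ for the inclusion of the $j$-th coordinate and set $\phi_j=A_*\circ\iota_j\colon\pi_k(X)\to\pi_k(X)$.

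The key algebraic step is then a Eckmann–Hilton / Hopf-type argument. For $k=1$ the group $\pi_1(X)^n$ is just the product group, and $A_*$ is a group homomorphism on it; writing any $(\alpha_1,\dots,\alpha_n)=\iota_1(\alpha_1)\cdots\iota_n(\alpha_n)$ gives $A_*(\alpha_1,\dots,\alpha_n)=\phi_1(\alpha_1)\cdots\phi_n(\alpha_n)$ in $\pi_1(X)$. Anonymity forces $\phi_1=\phi_2=\dots=\phi_n=:\phi$ (pull back along the coordinate permutations), and unanimity forces $\phi(\alpha)^n$ — more precisely $\phi(\alpha)\phi(\alpha)\cdots\phi(\alpha)=\alpha$ for all $\alpha$, i.e. the $n$-fold "product" of $\phi(\alpha)$ with itself equals $\alpha$. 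The crucial point is that for any $\alpha,\beta\in\pi_1(X)$ we can compute $A_*$ of the element $(\alpha,\beta,e,\dots,e)$ in two ways using that the coordinates of $\pi_1(X)^n$ commute with each other: on one hand it is $\phi(\alpha)\phi(\beta)$, and by swapping the first two coordinates (anonymity) it is $\phi(\beta)\phi(\alpha)$, so $\phi(\alpha)\phi(\beta)=\phi(\beta)\phi(\alpha)$ for all $\alpha,\beta$; hence the image of $\phi$ is abelian, and since $\phi(\pi_1(X))$ generates $\pi_1(X)$ (as $\phi(\alpha)^n=\alpha$ shows every element is in the subgroup generated by the image, using that it is a homomorphism from an abelian-after-restriction situation — actually $\alpha=\phi(\alpha)^n$ already exhibits $\alpha$ in the image's generated subgroup), $\pi_1(X)$ is abelian. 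Once $\pi_1(X)$ is known to be abelian I would switch to additive notation: $\phi$ is an endomorphism with $n\phi=\mathrm{id}$, so multiplication by $n$ is surjective (since $n\phi(\alpha)=\alpha$) and injective (if $n\alpha=0$ then $\alpha=n\phi(\alpha)=\phi(n\alpha)=0$), i.e. $\pi_1(X)$ is uniquely $n$-divisible.

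For the higher homotopy groups $\pi_k(X)$, $k\ge 2$, these are already abelian, and the identical argument applies verbatim: $A_*\colon\pi_k(X)^n\to\pi_k(X)$ is a homomorphism, anonymity makes all the coordinate restrictions equal to a single endomorphism $\phi$, unanimity gives $n\phi=\mathrm{id}$, and the two displayed computations above show multiplication by $n$ is an automorphism of $\pi_k(X)$. So every $\pi_k(X)$ is uniquely $n$-divisible. I expect the main obstacle to be the bookkeeping in the $\pi_1$ case — specifically, being careful that $A_*$ is genuinely a homomorphism on the honest product group $\pi_1(X^n)=\pi_1(X)^n$, that the decomposition of a general tuple into a product of the $\iota_j(\alpha_j)$ is legitimate (it is, because the images of distinct coordinate inclusions commute in a direct product), and that anonymity can be applied at the level of $\pi_1$ without basepoint trouble (the basepoint $(x_0,\dots,x_0)$ is permutation-fixed, so this is fine). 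Everything else is formal once the section/retraction picture and the Eckmann–Hilton-style swap are in place.
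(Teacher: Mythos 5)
Your proposal is correct and follows essentially the same route as the paper: identify $\pi_k(X^n)\cong\pi_k(X)^n$, use anonymity to reduce $A_*$ on each coordinate to a single endomorphism, use unanimity ($A\circ\Delta=\mathrm{id}$) together with the decomposition of a tuple into coordinate inclusions to get $n\phi=\mathrm{id}$, and use commutativity of the coordinate factors plus surjectivity to conclude $\pi_1(X)$ is abelian. Your explicit verification that multiplication by $n$ is both injective and surjective is a slightly more careful rendering of the paper's ``$n\rho_*=I$ provides unique $n$-divisibility,'' but the argument is the same.
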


We, now, present a simplified version of his proof.
\begin{proof}
Consider the induced homomorphism, $A_* : \pi_1(X^n) \to \pi_1(X)$. Now, $\pi_1(X^n) \cong (\pi_1(X))^n$. Due to the property of anonymity, $A_*$ is the same on each of its factors, call it $\rho_* : \pi_1(X) \to \pi_1(X)$. This is because $A_*$ is also a symmetric function. To see this, let $\gamma$ be a loop in $\pi_1(X^n)$. Then $\gamma = (\gamma_1,\gamma_2,...,\gamma_n)$ where each $\gamma_i : S^1 \to X$ is a loop in $\pi_1(X)$. $A_*([\gamma_1],[\gamma_2],...,[\gamma_n]) = A_*([\gamma]) = [A\gamma] = [A(\gamma_1,\gamma_2,...,\gamma_n)] = [A(\gamma_2,\gamma_1,...,\gamma_n)] = A_*([\gamma_2],[\gamma_1],...,[\gamma_n])$. This is true for swapping any two coordinates and for any reordering of arguments of the function as $A$ is a symmetric function, proving that $A_\star$ is also a symmetric function and hence is the same on each of its factors.
\\
Now, $A\Delta = I$, where $\Delta$ denotes the diagonal map $\Delta : X \to X^n$. Since $\pi_1$ is a functor, $A_* \Delta_* = I$ implying that $A_{*}$ is surjective. 
Also, $\Delta_* = {i_1}_{*} + {i_2}_{*} + ... {i_n}_{*}$, where ${i_j}_{*}$ denotes the induced homomorphism due to the inclusion map in the jth coordinate. Therefore, $A_* \Delta_* = A_* {i_1}_{*} + A_* {i_2}_{*} + ... + A_* {i_n}_{*}$. Since $A_*$ is the same on each of its coordinates, we get the equation $n\rho_* = I$. This provides the unique $n$-divisibility of the fundamental group and also that $\rho_*$ is surjective.
\\
The individual summands of $\pi_1(X^n)$ commute. Hence their images under $A_*$ also commute as $A_*$ is a group homomorphism. Further, $A_*$ and $\rho_*$ being surjective homomorphisms give us that $\pi_1(X)$ is abelian.  
The arguments used can be applied to the higher homotopy groups as well, inferring that all the higher homotopy groups are also uniquely $n$-divisible.
\end{proof}

\begin{theorem}
Let $X$ be a finite connected CW-complex. Assume that for all $n$, $A$, as defined above, exists. Then $X$ is contractible. 
\end{theorem}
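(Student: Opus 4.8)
The strategy is to show that all homotopy groups of $X$ vanish, then invoke Remark~\ref{Remark 1} (via Whitehead's theorem) to conclude that $X$ is contractible. By Lemma~\ref{Weinberger Lemma}, for every $n$ the existence of $A$ forces $\pi_1(X)$ to be abelian and all homotopy groups $\pi_k(X)$ to be uniquely $n$-divisible. Since this holds for \emph{all} $n$, each $\pi_k(X)$ is a uniquely divisible abelian group, i.e.\ a $\mathbb{Q}$-vector space. The finiteness of the CW-complex is what will collapse these $\mathbb{Q}$-vector spaces to zero.

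**First step: the fundamental group.** Since $X$ is a finite CW-complex, $\pi_1(X)$ is finitely generated (Remark~\ref{Remark 2}). A finitely generated abelian group that is uniquely divisible must be trivial: a nonzero finitely generated abelian group has either a $\mathbb{Z}$ summand (not divisible) or a finite cyclic summand (not uniquely divisible, as multiplication by its order is not injective). Hence $\pi_1(X) = 0$, so $X$ is simply connected.

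**Second step: the higher homotopy groups by induction.** Now I would show $\pi_k(X) = 0$ for all $k \geq 2$ by induction. Suppose $X$ is $(k-1)$-connected; in particular it is simply connected and $\tilde H_i(X) = 0$ for $i < k$ by the Hurewicz theorem (Theorem~\ref{Hurewicz Theorem}), which also gives $\pi_k(X) \cong H_k(X)$. But $X$ is a finite CW-complex, so $H_k(X)$ is finitely generated (Remark~\ref{Remark 3}, as cellular homology is computed from the finitely many $k$-cells). Therefore $\pi_k(X)$ is a finitely generated abelian group which, by Lemma~\ref{Weinberger Lemma} applied with arbitrary $n$, is uniquely divisible; as above this forces $\pi_k(X) = 0$. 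Hence $X$ is $k$-connected, and the induction proceeds. (The base case $k=1$ is the first step; note Hurewicz requires simple connectivity, which we have established.)

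**Conclusion.** By induction $\pi_k(X) = 0$ for all $k \geq 0$, so by Remark~\ref{Remark 1} the space $X$ is contractible. The main subtlety to be careful about is the interface between the two divisibility conclusions of Lemma~\ref{Weinberger Lemma} and finiteness: the key arithmetic fact, used twice, is that \emph{a finitely generated uniquely divisible abelian group is trivial}. Everything else is a standard Hurewicz--Whitehead induction, with finiteness of the complex supplying the finite generation of each homology group at each stage. I anticipate no serious obstacle; the only thing requiring mild care is ensuring the hypotheses of the Hurewicz theorem (here $n > 1$ and $(n-1)$-connectedness) are met at each inductive step, which they are since simple connectivity was obtained first.
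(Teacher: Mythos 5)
Your proposal is correct and follows essentially the same route as the paper: apply Lemma~\ref{Weinberger Lemma} for all $n$ to make each homotopy group a uniquely divisible abelian group (a $\QQ$-vector space), kill $\pi_1$ using finite generation from Remark~\ref{Remark 2}, and then kill the higher groups inductively via Hurewicz and the finite generation of the homology of a finite CW-complex, concluding with Whitehead's theorem. The only cosmetic difference is that you justify ``finitely generated plus uniquely divisible implies trivial'' via the structure theorem, whereas the paper notes that a nonzero direct sum of copies of $\QQ$ is not finitely generated; these are the same fact.
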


\begin{proof}
We know that $\pi_i(X)$ are uniquely $n$-divisible for all $n$ and $i$. Hence, $\pi_i(X)$ $\forall i>1$ form a vector space over the field of rational numbers. Hence, $\pi_i(X) \cong \bigoplus_{J_i} \mathbb{Q}$ $\forall i$ for some indexing sets $J_i$. We know that $\mathbb{Q}$ is not finitely generated as an abelian group and thus, $\bigoplus_{J_i}\mathbb{Q}$ is not finitely generated as an abelian group when $J_i \neq \phi$.

Assume $\pi_1(X)$ is not trivial. We have seen in Remark \ref{Remark 2} that the fundamental group of a finite CW complex is finitely generated as an abelian group. From Lemma \ref{Weinberger Lemma}, we get that $\pi_1(X) \cong \bigoplus_{J_1} \QQ$ for some indexing set $J_1$ which says that $\pi_1(X)$ is not finitely generated as an abelian group, which is a contradiction. Therefore, $\pi_1(X) \cong 0$.

Assume $\pi_2(X)$ is not trivial. Since $\pi_1(X)\cong 0$, by Theorem \ref{Hurewicz Theorem}, we get $\pi_2(X) \cong H_2(X)$. If $\pi_2(X) \cong \bigoplus_{J_2} \QQ$ for some indexing set $J_2(\neq \phi)$, it is not finitely generated as a group over $\ZZ$ implying that $H_2(X)$ is not finitely generated over $\ZZ$, which is a contradiction as we know from Theorem \ref{Cellular Homology} and Remark \ref{Remark 3} that the homology groups of a finite CW complex are finitely generated over $\ZZ$. Thus, $\pi_2(X) \cong 0$. Similarly, by induction over $n$, we get that $\pi_i(X) \cong 0$ $\forall i$. From Remark \ref{Remark 1} we get that $X$ is contractible.
\end{proof}

In the case that $X$ is not a finite CW-complex, Weinberger proved that it is of the homotopy type of a product of rational Eilenberg-MacLane spaces. Eilenberg-MacLane spaces are the spaces with at most a single non-vanishing homotopy group. If the homotopy group is abelian and uniquely divisible by $n$ for all $n$, then it is a vector space over the rational numbers. In such cases, an Eilenberg-MacLane space is called a rational Eilenberg-MacLane space. 

The converse of this theorem can be proven by a similar approach as that developed in the next proof that we give.

We now look at another choice function, namely, a majority function. A majority function is defined as follows:
\begin{defi}
Let $X$ be the choice space of each agent and let there be $n$ agents. Then, a \textbf{majority function} is a continuous function $M:X^n \to X$ such that \\
$M(x_1,x_2,...,x_n)=y$ if more than half of the arguments of the function take the value $y$.
\end{defi}

\begin{theorem}
Let $X$ be a locally finite connected CW-complex. If X is contractible, then there exists a majority function for any $n \in \NN$.
\end{theorem}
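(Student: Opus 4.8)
The plan is to construct the majority function in two stages: first pin it down on the closed subset of $X^n$ where a strict majority exists --- there it is forced --- and then extend it over all of $X^n$ using the fact that a contractible locally finite CW-complex is an absolute extensor for metrizable spaces.

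First I would fix $n$ and set $k = \lfloor n/2\rfloor + 1$, the least integer exceeding $n/2$. For each subset $S \subseteq \{1,\dots,n\}$ with $|S| = k$, let $D_S = \{(x_1,\dots,x_n) \in X^n : x_i = x_j \text{ for all } i,j \in S\}$, and put $D = \bigcup_{|S|=k} D_S$. Since $X$ is metrizable by Theorem \ref{Metrizability}, it is Hausdorff, so each $D_S$ is closed in $X^n$; as $D$ is a finite union of such sets, $D$ is closed in $X^n$, and $X^n$ is itself metrizable as a finite product of metrizable spaces. Observe that $D$ is precisely the set of tuples on which the defining condition of a majority function imposes any constraint.

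Next I would produce a well-defined continuous map $m \colon D \to X$ meeting the majority requirement. On $D_S$ define $m$ to be the projection onto any coordinate indexed by an element of $S$; this is continuous and independent of which index in $S$ is used. If $(x_1,\dots,x_n) \in D_S \cap D_T$ with $|S| = |T| = k$, then $|S| + |T| = 2k > n$, so $S \cap T \neq \emptyset$, and the two definitions agree there. By the pasting lemma applied to the finite closed cover $\{D_S\}$ of $D$, the map $m$ is well-defined and continuous, and by construction $m(x_1,\dots,x_n) = y$ whenever more than half of the $x_i$ equal $y$.

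Finally, since $X$ is a contractible locally finite CW-complex, Remark \ref{Remark 4} gives that $X$ is an AE. As $D$ is a closed subspace of the metrizable space $X^n$, the map $m \colon D \to X$ extends to a continuous map $M \colon X^n \to X$, and any such extension is a majority function since the majority condition concerns only tuples in $D$, where $M$ restricts to $m$. The only real content is this reduction to the absolute-extensor property, which is also the single place where contractibility of $X$ is used: without it one would obtain at best an extension over a neighbourhood of $D$ in $X^n$, not over all of $X^n$.
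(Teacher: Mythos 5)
Your proof is correct and takes essentially the same route as the paper: pin down the forced values on the closed subset of $X^n$ where a strict majority exists, then extend over all of $X^n$ via the absolute-extensor property of a contractible locally finite CW-complex. You are in fact more careful than the paper at two points --- you actually verify that the partial map on the majority locus is well-defined and continuous (via the intersection argument $2k>n$ and the pasting lemma, a step the paper leaves implicit), and you correctly invoke the AE property of the target $X$ rather than of $X^n$.
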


\begin{proof}
Let $A$ be the subspace of $X^n$ which consists of points which have the same coordinate in at least $[n/2]+1$ coordinates where $[ ]$ denotes the greatest integer function. Let $M':A \to X$ be defined on $A$ s.t $M'$ is a majority function on $A$. Now, we need to extend this to $X^n$ such that the resulting function is also continuous.Since $X$ is contractible, and thus $X^n$ is contractible, from Remark \ref{Remark 4}, it is clear that $X^n$ is an AE. Thus, it suffices to check that $A$ is closed in $X^n$.

Consider $x\in X^n \setminus A$. Let $x=(x_1,x_2,...,x_n)$.
Let $x$ have $k$ distinct coordinate values. Let $x'_1,x'_2,...,x'_k \in X$ be the distinct coordinate values. It is clear that $x$ can have at most $[n/2]$ equal coordinate values. Consider a surjective function $\phi \colon \{1,2,...,n\} \to \{1,2,...,k\}$ such that $\phi(i)=j$ if $x_i=x'_j$ $\forall 1\leqslant i,j \leqslant n$. Since $X$ is Hausdorff, there are disjoint open sets $U_j$ separating the distinct coordinate values from each other. Let $U = \prod_{i = 1}^{n} U_{\phi(i)}$. We claim that $U \subset X^n \setminus A.$ 
\\
Consider $y \in U$. $y$ cannot have more than $[n/2]$ coordinates equal because if $y$ did have even $[n/2]+1$ coordinates equal, it would imply that $\exists y_s$, $0 \leqslant s \leqslant n$ such that $y_s \in U_i \cap U_j$,$i \neq j$, which is not possible as the open sets are disjoint. Therefore, $U\subset X^n \setminus A$ and clearly $U$ is open. Thus, $X^n \setminus A$ is open in $X^n$ and hence, $A$ is closed in $X^n$.
\end{proof}

We now prove the converse of this theorem in the general setting of a locally finite CW-complex. 

\begin{theorem}
Let $X$ be a locally finite connected CW-complex. Let $M$ be a majority function on X for some $n \in \NN$. Then $X$ is contractible.
\end{theorem}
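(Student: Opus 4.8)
The plan is to show that the existence of $M$ forces every homotopy group of $X$ to vanish, after which contractibility follows from Whitehead's theorem as recorded in Remark \ref{Remark 1}. I will assume $n\ge 3$; note that for $n\le 2$ the clause ``more than half'' is equivalent to ``all'', so the defining property of a majority function reduces to unanimity, which certainly does not force contractibility (a coordinate projection is a majority function in that sense), and in that range one must either impose anonymity or fall back on the classical theory of topological means.

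Fix a basepoint $x_0\in X$; then $(x_0,\dots,x_0)$ is a basepoint of $X^n$, and $M(x_0,\dots,x_0)=x_0$ by unanimity, so $M$ induces homomorphisms $M_*\colon\pi_k(X^n)\to\pi_k(X)$ for every $k$. For $1\le j\le n$ let $i_j\colon X\to X^n$ be the inclusion of $X$ as the $j$-th coordinate (all other coordinates set to $x_0$), and let $\Delta\colon X\to X^n$ be the diagonal. The single observation that makes everything work is that $i_j(x)$ has $n-1$ coordinates equal to $x_0$, and $n-1>n/2$ precisely because $n\ge 3$; hence the majority property gives $M(i_j(x))=x_0$ for all $x$, i.e. $M\circ i_j$ is the constant map, so $(M\circ i_j)_*=0$ on every $\pi_k$. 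On the other hand all $n$ coordinates of $\Delta(x)$ equal $x$, whence $M\circ\Delta=\mathrm{id}_X$.

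These two facts are contradictory unless the homotopy groups are trivial. Indeed, under the canonical identification $\pi_k(X^n)\cong\pi_k(X)^n$ one has $\Delta_*=\sum_j (i_j)_*$ (for $k=1$ this is the product of the $(i_j)_*$, whose images lie in mutually commuting factors), so
\[
\mathrm{id}_{\pi_k(X)}=(M\circ\Delta)_*=M_*\circ\Delta_*=\sum_j M_*\circ(i_j)_*=\sum_j (M\circ i_j)_*=0 .
\]
Therefore $\pi_k(X)=0$ for all $k\ge 1$, and since $X$ is connected, Remark \ref{Remark 1} applies and $X$ is contractible.

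There is no real analytic difficulty here; the plan is almost entirely formal once the key observation is made. The step deserving a little care is the case $k=1$, where $M_*$ need not land in an abelian group: there one argues instead that $M_*$ annihilates each subgroup $(i_j)_*\pi_1(X)$, that these subgroups generate $\pi_1(X^n)$, hence $M_*=0$, and then composes with $\Delta_*$ as above. It is worth remarking that the argument uses nothing about $X$ beyond its being a connected CW-complex — local finiteness and the ANR theory of the Preliminaries are needed only for the converse implication — and that the inequality $n-1>n/2$ is the precise point at which the majority hypothesis is used, which is why the statement genuinely requires $n\ge 3$.
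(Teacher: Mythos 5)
Your proof is correct and follows essentially the same route as the paper's: both reduce to the observation that $M$ composed with each coordinate inclusion is nullhomotopic (because $n-1>n/2$) while $M\circ\Delta=\mathrm{id}_X$, and that $\Delta_*$ is the product of the $(i_j)_*$ in $\pi_k(X^n)\cong\pi_k(X)^n$, forcing every homotopy group to vanish. Your explicit restriction to $n\ge 3$ is a worthwhile addition: the paper's step $M_\star f(a,0,\dots,0)=0$ silently uses the same inequality, and the statement as written does fail for $n\le 2$.
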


\begin{proof}
Consider the induced homomorphism $M_\star$ on the fundamental groups. $M_\star: \pi_1(X^n) \to \pi_1(X)$. Now, $\pi_1(X^n) \cong (\pi_1(X))^n$. By the definition of the induced homomorphism,
$M_\star[\gamma] = [M\circ\gamma]$ where $\gamma : S^1 \to X^n$. Let $f:(\pi_1(X))^n \to \pi_1(X^n)$ be the isomorphism. $[\gamma] = f([\gamma_1], [\gamma_2],...,[\gamma_n])$ where each $\gamma_i : S^1 \to X$. It is clear that $M_\star([\gamma_1], [\gamma_2],...,[\gamma_n]) = [\tau]$ if $[\gamma_i] = [\tau]$ for more than $[n/2]$ arguments of the function. and hence $M_\star f$ is also a majority function. For, $M_\star f[\gamma] = [M \circ f(\gamma_1,\gamma_2,...,\gamma_n)]$ and $M$ is a majority function.\\
Assume that $\pi_1(X)$ is not trivial. Let $a\in \pi_1(X)$ be non-trivial. Then, $M_\star f(a,0,...,0)=0$ as $M_\star$ is a majority function. This is clearly true if $a$ is in any coordinate and the rest are zero. Now, we add all such distinct tuples to get $M_\star f(a,a,a,...,a) = 0$, as $M_\star f$ is a homomorphism. But $M_\star f(a,a,...,a)=a$ as $M_\star f$ is a majority function. This is a contradiction. Hence, $\pi_1(X)$ is trivial. The above argument applies to all the higher homotopy groups proving that $\pi_i(X)$ is trivial $\forall i$. Since $X$ is a connected CW-complex and all the homotopy groups are trivial, $X$ is contractible, as seen in Remark \ref{Remark 1}.

\end{proof}
\end{section}

\section*{Acknowledgements}
I would like to thank Prof. Dr. Andreas Thom for offering me this project and being my mentor for the same. I would also like to thank Dr. Martin Nitsche for guiding me through the project.

\begin{bibdiv}
\begin{biblist}

\bib{Chichilnisky-Heal}{article}{
  author={Chichilnisky, Graciela},
  author={Heal, Geoffrey},
  title={Necessary and sufficient conditions for a resolution of the social choice paradox},
  journal={J. Econom. Theory},
  volume={31},
  number={1}
  year={1983},
  pages={68--87},
 } 
  
\bib{Hatcher}{book}{
  author={Hatcher, Allen},
  title={Algebraic topology},
  publisher={Cambridge University Press, Cambridge},
  year={2002},
  pages={xii + 544},
}

\bib{Sibe}{book}{
  author={Marde\v si\'c, Sibe},
  title={Absolute neighborhood retracts and shape theory},
  booktitle={History of topology},
  publisher={North-Holland, Amsterdam},
  year={1999},
  pages={241--269},
}

\bib{Sakai}{book}{
  author={Sakai, Katsuro},
  title={Geometric aspects of general topology},
  publisher={Springer, Tokyo},
  year={2013},
  pages={xvi + 521},
}

\bib{Taylor}{book}{
author={Taylor, Walter},
title={The clone of a topological space},
series={Research and Exposition in Mathematics},
volume={13},
publisher={Heldermann Verlag, Berlin},
year={1981}
pages={v+91}
}

\bib{Weinberger}{article}{ 
title={On the topological social choice model},
author={Weinberger, Shmuel},
Journal={J. Econom. Theory},
volume={115},
number={2},
year={2004},
pages={377--384},
}

\end{biblist}
\end{bibdiv}

\end{document}